\theoremstyle{definition}
\def\fnum{equation}
\newtheorem{Thm}[\fnum]{Theorem}
\newtheorem{Cor}[\fnum]{Corollary}
\newtheorem{Lem}[\fnum]{Lemma}
\newtheorem{Pro}[\fnum]{Proposition}
\numberwithin{equation}{section}
\newcommand{\nn}{{\bf{n}}}
\def\RR{{\bold R}}
\def\SS{{\bold S}}
\newcommand{\dv}{{\text {div}}}
\newcommand{\e}{{\text {e}}}
\newcommand{\Area}{{\text {Area}}}
\newcommand{\cS}{{\mathcal{S}}}
\newcommand{\eqr}[1]{(\ref{#1})}
\title[Compactness of self-shrinkers]{Smooth compactness of   self-shrinkers}
\author{Tobias H. Colding}%
\address{MIT, Dept. of Math.\\
77 Massachusetts Avenue, Cambridge, MA 02139-4307.}
\author{William P. Minicozzi II}%
\address{Johns Hopkins University\\
Dept. of Math.\\
3400 N. Charles St.\\
Baltimore, MD 21218}
\thanks{The   authors
were partially supported by NSF Grants DMS  0606629 and DMS
0405695}
\email{colding@math.mit.edu  and minicozz@math.jhu.edu}
\begin{document}

\maketitle

\begin{abstract}
We prove a smooth compactness theorem for the space of embedded self-shrinkers in $\RR^3$.
Since self-shrinkers model singularities in mean
curvature flow, this theorem can be thought of as a compactness result for the
space of all singularities and it plays an important role in studying generic mean curvature flow.
\end{abstract}

\section{Introduction}

A  surface $\Sigma \subset \RR^3$ is  said to be a {\emph{self-shrinker}} if it satisfies
\begin{equation}	\label{e:ss}
	H = \frac{ \langle x , \nn \rangle }{2} \, ,
\end{equation}
where $H = \dv \,  \nn$ is the mean curvature, $x$ is the position vector, and $\nn$ is the unit normal.   This is easily seen to be equivalent to that $\Sigma$ is the $t=-1$ time-slice{\footnote{In \cite{H3},  self-shrinkers are  time $t= - \frac{1}{2}$ slices of   self-shrinking  MCFs; these satisfy $H = \langle x , \nn \rangle$.}}  of a mean curvature flow (``MCF'') moving by rescalings, i.e., where the time $t$ slice is given by $\sqrt{-t} \, \Sigma$.

Self-shrinkers play an important role in the study of mean curvature flow.  Not only are they the simplest examples (those where later time slices are rescalings of earlier),  but  they also  describe all possible blow ups at a given singularity of a mean curvature flow.    The  idea is that we can rescale a MCF in space and time to obtain a new MCF, thereby expanding a  neighborhood of the point that we want to focus on.   Huisken's monotonicity, \cite{H3},    and Ilmanen's  compactness theorem, \cite{I1},  give  a subsequence converging to a limiting solution of the MCF; cf. \cite{I1}, \cite{W1}.  This limit, which is called a {\emph{tangent flow}},
 achieves equality in Huisken's monotonicity and, thus,    its time $t$ slice  is $\sqrt{-t} \, \Sigma$ where $\Sigma$ is a  self-shrinker.

The main result of this paper is the following smooth compactness theorem
for self-shrinkers in $\RR^3$ that is used in \cite{CM1}.

\begin{Thm}	\label{c:cpt}
 Given an integer $g \geq 0$ and a  constant $V>0$,
the space  of smooth complete embedded self-shrinkers $\Sigma \subset \RR^3$ with
\begin{itemize}
\item genus at most $g$,
\item $\partial \Sigma = \emptyset$,
\item $\Area \, \left( B_{R}(x_0) \cap \Sigma \right) \leq V \, R^2$  for all $x_0 \in \RR^3$ and all $R > 0$
 \end{itemize}
 is compact.

 Namely, any sequence of these has a subsequence that converges in the
 topology of  $C^m$ convergence on compact subsets for any $m \geq 2$.
\end{Thm}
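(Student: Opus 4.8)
The plan is to reduce the statement to a uniform local second fundamental form bound, $\sup_i\sup_{B_R(0)\cap\Sigma_i}|A|\le C(g,V,R)$ for every $R$, and then to appeal to standard elliptic theory. Indeed, granted such a bound, one writes each $\Sigma_i$ near any point as a union of graphs over a plane --- the area hypothesis bounds the number of sheets, and, in graphical coordinates, the self-shrinker equation $H=\langle x,\nn\rangle/2$ is a smooth quasilinear elliptic equation --- so Schauder estimates and bootstrapping give uniform $C^m$ bounds on the graphs, and the Arzel\`{a}--Ascoli theorem together with a diagonal argument over an exhaustion by balls $B_R(0)$, $R\to\infty$, yields a subsequence converging in $C^m_{\mathrm{loc}}$ to a smooth complete self-shrinker $\Sigma_\infty$; embeddedness and the strong maximum principle for the self-shrinker equation force $\Sigma_\infty$ to be embedded and the convergence to have multiplicity one. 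So the whole problem is the curvature bound.

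For this I would use two ingredients. The first is an $\epsilon$-regularity statement: there exist $\epsilon_0>0$ and, for each $R$, a constant $C_R$, so that if $\Sigma$ is a self-shrinker, $B_{2s}(y)\subset B_R(0)$, and $\int_{B_{2s}(y)\cap\Sigma}|A|^2<\epsilon_0$, then $\sup_{B_s(y)\cap\Sigma}|A|^2\le C_R$. This follows by passing to the conformally changed Gaussian metric $\bar g=e^{-|x|^2/4}\,\delta$ on $B_{2R}(0)$ --- in which self-shrinkers are exactly minimal surfaces and $(B_{2R}(0),\bar g)$ has geometry bounded in terms of $R$, and in which the Euclidean area bound becomes a $\bar g$-area bound --- and then invoking the Choi--Schoen curvature estimate for minimal surfaces in a three-manifold (or arguing directly from the Simons-type inequality that $|A|$ satisfies on a self-shrinker). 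The second ingredient is a total curvature bound $\int_{B_R(0)\cap\Sigma_i}|A|^2\le C(g,V,R)$: the Gauss equation gives $|A|^2=H^2-2K$, the self-shrinker equation gives $H^2=\tfrac14\langle x,\nn\rangle^2\le\tfrac14|x|^2\le\tfrac14 R^2$ on $B_R(0)$, hence $\int_{B_R(0)\cap\Sigma_i}|A|^2\le\tfrac14 R^2\cdot V R^2+2\bigl|\int_{B_R(0)\cap\Sigma_i}K\bigr|$, and one controls $\int K$ by applying Gauss--Bonnet to $\Sigma_i\cap B_r(0)$ for a good radius $r\in[R,2R]$ selected via Sard's theorem and the coarea formula, using that the genus of a subdomain is at most $g$, that the area bound and Huisken's monotonicity limit the number of relevant components and boundary circles, and estimating the geodesic curvature of the slice.

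Combining the two: for each $R$, at most $C(g,V,R)/\epsilon_0$ balls in a fixed cover of $B_R(0)$ can carry $\epsilon_0$ or more of the total curvature, so by the $\epsilon$-regularity the curvature of $\Sigma_i$ is uniformly bounded on $\Sigma_i$ away from a finite set $\{p_1,\dots,p_N\}$; after passing to a subsequence, $\Sigma_i\to\Sigma_\infty$ smoothly on compact subsets of $\RR^3\setminus\{p_1,\dots,p_N\}$ and as integral varifolds on $\RR^3$, with $\Sigma_\infty$ a self-shrinker off the $p_j$ of everywhere bounded density. If the density at some $p_j$ were one, $\Sigma_\infty$ would extend smoothly across $p_j$ with multiplicity one and hence, by an Allard-type argument, the convergence would be smooth there too, contradicting $p_j\in\{p_1,\dots,p_N\}$; so the density at each $p_j$ is an integer $\ge 2$, that is, at least two sheets of $\Sigma_i$ coalesce near $p_j$. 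Ruling this out is the heart of the matter and the main obstacle: the mechanism must exploit the self-shrinker equation more strongly than the soft bounds above. A blow-up at $p_j$ at scales tending to infinity kills the $\langle x,\nn\rangle/2$ term and produces a non-flat complete embedded minimal surface in $\RR^3$ with quadratic area growth, genus $\le g$, and finite total curvature (so, by Osserman's theorem and the ends formula, total curvature $\ge 8\pi$), while the separation between the coalescing sheets, suitably normalized, should converge to a positive solution of the linearization of the self-shrinker equation on $\Sigma_\infty$, which is incompatible with the known instability of self-shrinkers; the genus bound controls both the number of such points and the topology of each degeneration, and putting these together should force $N=0$. Once $N=0$, the curvature is uniformly bounded on every $B_R(0)$ and the reduction in the first paragraph completes the proof. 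The two points that require the most care are the geodesic-curvature estimate in the total curvature bound and, above all, this final exclusion of concentration.
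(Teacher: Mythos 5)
Your first two steps---the $\epsilon$-regularity in the conformal (Gaussian) metric and the local total-curvature bound via Gauss--Bonnet, genus and area---are essentially the paper's first ingredient (Proposition \ref{p:cp1} via Choi--Schoen together with Ilmanen's local Gauss--Bonnet estimate, globalized in Corollary \ref{c:cp1}), and that part of your outline is fine. The problem is that you stop exactly where the paper's proof actually begins: the exclusion of concentration points and of multiplicity greater than one is stated by you only as what ``should'' happen (``should converge to a positive solution,'' ``should force $N=0$''), so the heart of the theorem is asserted rather than proved. Moreover the substitutes you do offer are not sufficient. The strong maximum principle does not force multiplicity one: disjoint sheets of an embedded $\Sigma_i$ collapsing onto the limit never touch it at finite $i$ (compare rescaled catenoids converging to a plane with multiplicity two), so nothing soft rules this out; and the blow-up producing a complete embedded minimal surface of finite total curvature cannot by itself exclude concentration (also, the bound ``$\geq 8\pi$'' is wrong---the catenoid is embedded with total curvature $4\pi$), nor is any argument given that the genus bound then ``forces $N=0$.''

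What is actually needed, and what the paper supplies, is the following two-part mechanism. First (Proposition \ref{p:jacobi}): if the multiplicity is greater than one, the normalized difference $w_i^+-w_i^-$ of the top and bottom sheets gives, after Harnack and Schauder estimates, a nonnegative solution of $Lu=0$ on $\Sigma\setminus\cS$ with $L=\Delta+|A|^2-\tfrac12\langle x,\nabla(\cdot)\rangle+\tfrac12$; one must then prove the singularities in $\cS$ are removable (the paper does this by comparing with White's local foliation by minimal graphs and the maximum principle, to bound $u_i$ near each singular point), and finally convert the positive solution $u$ into $L$-stability of $\Sigma$ by the Fischer--Colbrie--Schoen substitution $w=\log u$ and the weighted Stokes argument. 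Second (Theorem \ref{t:spectral0}): one must \emph{prove} that no complete self-shrinker with polynomial volume growth is $L$-stable; this is not a generic ``known instability'' fact but a specific theorem, obtained from $L\langle v,\nn\rangle=\tfrac12\langle v,\nn\rangle$ for constant $v$ together with a cutoff argument that uses the polynomial volume growth. Without carrying out (or at least precisely invoking) both of these steps---including the removable-singularity argument across the concentration points, which your sketch never addresses---the proposal does not prove the theorem.
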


The surfaces in this theorem are assumed to be homeomorphic to closed surfaces with finitely many disjoint disks removed.
The genus of the surface is defined to be the genus of the corresponding closed surface.  For example,  an annulus is a sphere with two disks removed and, thus, has genus zero.
 Below, we will use that the genus is monotone in the sense that if $\Sigma_1 \subset \Sigma_2$, then the genus of $\Sigma_1$ is at most that of $\Sigma_2$.

  As mentioned, the main motivation for this result is that self-shrinkers model singularities in mean curvature flow.  Thus, the above theorem can be thought of as a compactness result for the space of all singularities.   In practice,  scale-invariant local area bound, smoothness, and the genus bound  will automatically come from corresponding bounds on the initial surface in a MCF.   Namely:
 \begin{itemize}
 \item Area bounds are a direct consequence of Huisken's monotonicity formula, \cite{H3}.{\footnote{See, for instance, corollary $2.13$ in \cite{CM1}.}}
 \item Ilmanen proved that in $\RR^3$  tangent flows at the first singular time must be smooth and have genus at most that of the initial surface; see theorem $2$ of \cite{I1} and page $21$ of \cite{I1}, respectively.
 \end{itemize}
 Conjecturally, the smoothness and genus bound hold at all singular times:
 \begin{itemize}
 \item Ilmanen conjectured that tangent flows are smooth and have multiplicity  one at all singularities.
If this conjecture holds, then it would follow from Brakke's regularity theorem that near a singularity the flow can be written as a graph of a function with small gradient over the tangent flow.  Combining this with the above mentioned monotonicity of the genus of subsets and a result of White, \cite{W3}, asserting that the genus of the evolving surfaces   are always bounded by that of the initial surface, we get conjecturally that the genus of the tangent flow is at most that of the initial surface.
\end{itemize}

Our compactness theorem will play an important role in understanding generic mean curvature flow   in \cite{CM1}.  Namely, in \cite{CM1}, we will see that it follows immediately from compactness together with the classification of (entropy) stable self-similar shrinkers proven in \cite{CM1} that given an integer $m$ and $\delta>0$, there exists an $\epsilon=\epsilon (m,\delta,V,g)>0$ such that:
 \begin{itemize}
 \item For any unstable self-similar shrinker in $\RR^3$ satisfying the assumptions of Theorem \ref{c:cpt}, there is a surface $\delta$-close to it in the $C^m$ topology and with entropy less than that of the self-similar shrinker $-\epsilon$.
 \end{itemize}
 This is, in particular, a key to showing that mean curvature flow that disappears  in a compact point does so generically in a round point; see \cite{CM1} for details and further applications.

 The simplest examples of self-shrinkers in $\RR^3$ are the  plane $\RR^2$, the sphere of radius
$2$, and  the cylinder   $\SS^1 \times \RR^1$ (where the $\SS^1$ has radius $\sqrt{2}$).  Combining \cite{H3}, \cite{H4}, and theorem 0.18 in \cite{CM1} it follows that these are the  only smooth embedded self-shrinkers with $H \geq 0$ and polynomial volume growth.{\footnote{Huisken, \cite{H3}, \cite{H4}, showed that these are the  only smooth embedded self-shrinkers with $H \geq 0$, $|A|$ bounded, and polynomial volume growth.  In \cite{CM1}, we prove that this is the case even without assuming a bound on $|A|$.}}  
It follows from this that spheres and cylinders are isolated (among all self-shrinkers) in the $C^2$-topology. On the other hand, by Brakke's theorem, \cite{Br},  any self-shrinker with entropy sufficiently close to one (which is the entropy of the plane) must be flat, so  planes are also isolated and we see that
all three of the simplest self-shrinkers are isolated.     Moreover, one of the key results of \cite{CM1} (see theorem 0.7 there) was to show that these are the only (entropy) stable self-shrinkers.    
   In sum, self-shrinkers either with $H\geq 0$ or that are stable are one of the three simplest types and all of those are isolated among all self-shrinkers.    \footnote{Both the classification of stable self shrinkers from \cite{CM1} and that those are isolated are implicitly used in the application in \cite{CM1}, mentioned above, of our compactness theorem to prove that the $\epsilon>0$ above can be chosen independently of the self-shrinker and not just independently for all self-shrinkers a definitely distance away from one of the stable ones. }     
  However,   there are expected to be many  examples of self-shrinkers in $\RR^3$ where $H$ changes sign or that are unstable.   In particular, Angenent, \cite{A},  constructed a self-shrinking torus
 of revolution and there is numerical evidence for a number of other examples; cf. Chopp, \cite{Ch}, Angenent-Chopp-Ilmanen, \cite{AChI}, Ilmanen, \cite{I2}, and Nguyen, \cite{N1}, \cite{N2}.  These examples suggest that compactness fails to hold without a genus bound.

\vskip2mm
There are three key ingredients  in the proof of the compactness theorem.  The first is a singular compactness theorem that gives a subsequence that converges to a smooth limit away from a locally finite set of points.
Second, we show that if the convergence is not smooth, then the limiting self-shrinker is $L$-stable, where $L$-stable means that for any compactly supported function $u$ we have
\begin{equation}	\label{e:Lstable}
	\int_{\Sigma} \left( - u \, L \, u \right) \, \e^{ - \frac{|x|^2}{4} } \geq 0 \, .
\end{equation}
Here $L$ is the second order operator from \cite{CM1} that is given by
\begin{equation}
	L \, u = \Delta \, u + |A|^2 \, u  - \frac{1}{2} \, \langle x , \nabla u \rangle + \frac{1}{2} \, u \, .
\end{equation}
The last ingredient is the following result from \cite{CM1}:

\begin{Thm}	\label{t:spectral0}
\cite{CM1}
There are no $L$-stable smooth complete self-shrinkers without boundary and with polynomial volume growth in $\RR^{n+1}$.
\end{Thm}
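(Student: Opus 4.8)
The plan is to show that the constant function on $\Sigma$ — not compactly supported, but well approximated by cutoffs because polynomial volume growth makes the weighted volume finite — already witnesses the failure of \eqref{e:Lstable}. The only structural feature of $L$ that is needed is that its zeroth order coefficient is $|A|^2 + \frac12 \geq \frac12$, strictly positive.

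First I would rewrite $L = \cL + |A|^2 + \frac12$, where $\cL \, u = \Delta u - \frac12 \langle x , \nabla u \rangle$ is the drift Laplacian, which is self-adjoint with respect to the weighted measure $\e^{-|x|^2/4} \, dV_\Sigma$; concretely $\dv \, \bigl( \e^{-|x|^2/4} \nabla u \bigr) = \e^{-|x|^2/4} \, \cL \, u$. Since $\partial \Sigma = \emptyset$ and $\Sigma$ is complete, for any compactly supported $u$ integration by parts gives
\[
\int_{\Sigma} \bigl( - u \, L \, u \bigr) \, \e^{-|x|^2/4} \; = \; \int_{\Sigma} |\nabla u|^2 \, \e^{-|x|^2/4} \; - \; \int_{\Sigma} \bigl( |A|^2 + \tfrac12 \bigr) \, u^2 \, \e^{-|x|^2/4} .
\]
Hence if $\Sigma$ were $L$-stable, then for every compactly supported $u$,
\[
\int_{\Sigma} |\nabla u|^2 \, \e^{-|x|^2/4} \; \geq \; \int_{\Sigma} \bigl( |A|^2 + \tfrac12 \bigr) \, u^2 \, \e^{-|x|^2/4} \; \geq \; \tfrac12 \int_{\Sigma} u^2 \, \e^{-|x|^2/4} .
\]

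Next I would use the polynomial volume growth: $\Area ( B_R \cap \Sigma ) \leq C \, R^d$ for some $C , d$ forces $\int_{\Sigma} \e^{-|x|^2/4} < \infty$ and, more to the point, $\int_{\Sigma \setminus B_R} \e^{-|x|^2/4} \to 0$ as $R \to \infty$. Then I would plug in the (Lipschitz, hence admissible by approximation) cutoffs $\phi_R = \min \{ 1 , \max \{ 0 , 2 - |x|/R \} \}$, which satisfy $\phi_R \equiv 1$ on $B_R \cap \Sigma$, $\phi_R \equiv 0$ outside $B_{2R}$, and $|\nabla \phi_R| \leq 1/R$. This gives $\int_{\Sigma} |\nabla \phi_R|^2 \, \e^{-|x|^2/4} \leq R^{-2} \int_{(B_{2R} \setminus B_R) \cap \Sigma} \e^{-|x|^2/4} \to 0$, while $\int_{\Sigma} \phi_R^2 \, \e^{-|x|^2/4} \to \int_{\Sigma} \e^{-|x|^2/4} > 0$ by monotone convergence. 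Feeding $u = \phi_R$ into the displayed inequality and letting $R \to \infty$ yields $0 \geq \tfrac12 \int_{\Sigma} \e^{-|x|^2/4} > 0$, a contradiction; so no such $\Sigma$ is $L$-stable.

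There is essentially no hard step: the argument is a soft Rayleigh-quotient computation. The only items needing (routine) care are the no-boundary-term integration by parts, which uses completeness together with compact support, and the verification that polynomial volume growth kills both the cutoff's gradient term and leaves the zeroth order term bounded below by the positive constant $\tfrac12 \int_\Sigma \e^{-|x|^2/4}$. Note in particular that one never needs any bound on $\int_{\Sigma} |A|^2 \, \e^{-|x|^2/4}$ — it may well be infinite — since only $|A|^2 + \tfrac12 \geq \tfrac12$ is used. Conceptually, the content is that $L$-stability would force the bottom of the weighted spectrum of $-\cL$ on $\Sigma$ to be $\geq \tfrac12$, whereas finiteness of the weighted volume makes the constant function a test function with Rayleigh quotient tending to $0$.
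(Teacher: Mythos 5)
Your argument is correct, but it takes a genuinely different route from the paper's appendix. You test stability with (cut-off) constants and use only the sign of the zeroth-order coefficient: writing $L = \cL + |A|^2 + \tfrac12$ with $\cL$ the drift Laplacian, which is self-adjoint for the weight $\e^{-|x|^2/4}$, the stability inequality \eqr{e:Lstable} becomes $\int |\nabla u|^2 \e^{-|x|^2/4} \geq \int (|A|^2 + \tfrac12) u^2 \e^{-|x|^2/4} \geq \tfrac12 \int u^2 \e^{-|x|^2/4}$, and the Lipschitz cutoffs $\phi_R$ (admissible here just as the paper's own linear cutoff $\eta$ is) kill the left side while the right side tends to $\tfrac12 \int_{\Sigma} \e^{-|x|^2/4} > 0$, using polynomial volume growth exactly where the paper does, namely to make the weighted area finite with vanishing tails; the potentially infinite quantity $\int_{\Sigma} |A|^2 \e^{-|x|^2/4}$ never enters because it appears with the favorable sign. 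The paper instead tests with $\eta \, v$ where $v = \langle \nn(p), \nn \rangle$ is the translation eigenfunction satisfying $L v = \tfrac12 v$ by \eqr{e:spec1}, and exploits $|v| \leq 1$ together with the same cutoff/tail estimate. What your version buys is economy: it needs no eigenfunction identity at all, only $|A|^2 \geq 0$, and it makes transparent that the bottom of the weighted spectrum of $-L$ is at most $-\tfrac12$ on any such shrinker. What the paper's version buys is geometric content and continuity with \cite{CM1}: the destabilizing direction there is an actual translation of the shrinker, which is the structurally relevant fact when one later studies $F$-stability modulo translations and dilations, where constants are not among the geometric variations one is allowed to quotient out. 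Both proofs are complete and rest on the same two pillars (self-adjointness of the drift term under the Gaussian weight, and polynomial volume growth for the cutoff argument).
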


To keep this paper self-contained, we will   prove   Theorem \ref{t:spectral0} in an appendix.

\vskip2mm
 Finally, we note that
the results of \cite{CM4}--\cite{CM8} suggest that there is a compactness theorem for embedded self-shrinkers even without an area bound.
 However,  as mentioned above, then it follows from Huisken's monotonicity formula that self-shrinkers  arising as tangent flows at singularities of a MCF starting at a smooth closed surface automatically satisfy an area bound for some constant depending only on the initial surface.

\subsection{Conventions and notation}
A one-parameter family $M_t$ of hypersurfaces in $\RR^{n+1}$ flows by mean curvature if
\begin{equation}
	\left( \partial_t X \right)^{\perp} =  - H \, \nn \, ,
\end{equation}
where
  $\nn$ is the outward unit normal and the mean curvature $H$ is given by
$H = \dv \,  \nn$.
With this convention, $H$ is $n/R$ on the $n$-sphere of radius $R$ in $\RR^{n+1}$ and $H$ is $k/R$ on the ``cylinder'' $\SS^k \times \RR^{n-k} \subset \RR^{n+1}$ of radius $R$.
 If $e_i$ is an orthonormal frame for $\Sigma$, the coefficients of the second fundamental form are defined to be
$a_{ij} = \langle \nabla_{e_i} e_j , \nn \rangle$.  In particular, we have
\begin{equation}	\label{e:aij}
	\nabla_{e_i} \nn  = - a_{ij} e_j \, .
\end{equation}
Since $\langle \nabla_{\nn} \nn , \nn \rangle =0$, the mean curvature is   $H = \langle \nabla_{e_i} \nn , e_i \rangle = - a_{ii}$ where by convention we are summing over repeated indices.

\section{The self-shrinker equation}

The starting point for understanding self-shrinkers is to realize that
there are several other ways to characterize self-shrinkers that are equivalent to the equation \eqr{e:ss}:
\begin{enumerate}
	\item The one-parameter family of hypersurfaces $\sqrt{-t} \, \Sigma \subset \RR^{n+1}$ satisfies MCF.
	\item $\Sigma$ is a minimal hypersurfaces in $\RR^{n+1}$, not with the Euclidean metric $\delta_{ij}$, but with the conformally changed metric
$
	g_{ij}  = \e^{\frac{-|x|^2}{2n}}  \, \delta_{ij} \, .
$
\item $\Sigma$ is a critical point for the functional $F$ defined on a hypersurface $\Sigma \subset \RR^{n+1}$ by
 \begin{equation}	\label{e:F}
 	F  (\Sigma) = (4\pi)^{-n/2} \, \int_{\Sigma} \, \e^{\frac{-|x|^2}{4}} \, d\mu \, .
\end{equation}
\end{enumerate}
The characterization (2) is particularly useful since it will allow us to use local estimates and compactness theorems for minimal surfaces to get corresponding results for self-shrinkers.

\subsection{The equivalence of (1), (2), and (3)}
The fact that (1), (2), and (3) are equivalent to satisfying the self-shrinker equation \eqr{e:ss} is well-known, but we will include a short proof of this in the next two lemmas.

 \begin{Lem}	\label{l:ss}
If a hypersurface $\Sigma$ satisfies  \eqr{e:ss}, then $M_t=\sqrt{-t}\,\Sigma$ satisfies MCF and
\begin{equation} \label{e:selfs}
H_{M_t}=-\frac{\langle  x,\nn_{M_t}\rangle}{2t}\, .
\end{equation}
Conversely, if  $M_t$ is an MCF, then  $M_t = \sqrt{-t} \, M_{-1}$
 if and only if
  $M_{t}$ satisfies \eqr{e:selfs}.
\end{Lem}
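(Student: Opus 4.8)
The plan is to reduce the whole lemma to two elementary facts: the behaviour of the position vector and of the mean curvature under the dilation $x \mapsto \sqrt{-t}\,x$, together with the observation that a mean curvature flow is constrained only through its normal velocity (tangential motion being mere reparametrization). Fix a parametrization $X_0 \colon \Sigma \to \RR^{n+1}$ and, for $t<0$, set $X(p,t) = \sqrt{-t}\,X_0(p)$, so that $M_t = \sqrt{-t}\,\Sigma$ is the image of $X(\cdot,t)$. Differentiating in $t$ gives
\begin{equation}
	\partial_t X = \frac{-1}{2\sqrt{-t}}\,X_0(p) = \frac{X(p,t)}{2t} \, ,
\end{equation}
hence $(\partial_t X)^{\perp} = \frac{\langle X , \nn \rangle}{2t} \, \nn$. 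On the other hand, a dilation by $\lambda = \sqrt{-t}$ leaves the unit normal unchanged and multiplies the second fundamental form, hence also $H$, by $\lambda^{-1}$; under the correspondence $X_0(p) \leftrightarrow X(p,t)$ this reads $\nn_{M_t} = \nn_{\Sigma}$ and $H_{M_t} = (-t)^{-1/2}\,H_{\Sigma}$. Substituting \eqr{e:ss} (i.e.\ $H_{\Sigma} = \langle X_0 , \nn \rangle / 2$) and $X_0 = X / \sqrt{-t}$ into the latter yields $H_{M_t} = \langle X , \nn \rangle / (2(-t)) = -\langle X , \nn \rangle/(2t)$, which is \eqr{e:selfs}, and comparing with the formula for $(\partial_t X)^{\perp}$ gives $(\partial_t X)^{\perp} = -H_{M_t}\,\nn$, i.e.\ $M_t$ flows by mean curvature. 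This settles the first assertion.

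For the converse, first suppose $M_t$ is an MCF with $M_t = \sqrt{-t}\,M_{-1}$. The computation above, applied with $\Sigma$ replaced by $M_{-1}$, shows $(\partial_t X)^{\perp} = \frac{\langle X , \nn\rangle}{2t}\,\nn$; since $M_t$ is an MCF this also equals $-H_{M_t}\,\nn$, and evaluating at $t=-1$ gives $H_{M_{-1}} = \langle X , \nn\rangle/2$, so $M_{-1}$ satisfies \eqr{e:ss}. The first assertion then supplies \eqr{e:selfs} for all $t<0$.

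Conversely, assume $M_t$ is an MCF satisfying \eqr{e:selfs}. I would pass to the rescaled flow by putting $s = -\log(-t)$ and $\tilde X(p,s) = X(p,t)/\sqrt{-t}$, with image $\tilde M_s$, which is $M_t$ dilated back to unit scale. A chain-rule computation, using $dt/ds = -t$, the MCF equation $(\partial_t X)^{\perp} = -H_{M_t}\,\nn$, the scaling $H_{\tilde M_s} = \sqrt{-t}\,H_{M_t}$ and scale-invariance of $\nn$, gives
\begin{equation}
	(\partial_s \tilde X)^{\perp} = \left( \frac{1}{2}\,\langle \tilde X , \nn \rangle - H_{\tilde M_s} \right) \nn \, .
\end{equation}
Multiplying \eqr{e:selfs} through by $\sqrt{-t}$ shows it is equivalent to $H_{\tilde M_s} = \langle \tilde X , \nn\rangle/2$, i.e.\ to $(\partial_s \tilde X)^{\perp} = 0$. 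A rescaled flow with vanishing normal velocity has constant image, so $\tilde M_s$ is independent of $s$, whence $M_t = \sqrt{-t}\,\tilde M_s = \sqrt{-t}\,M_{-1}$.

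Each step is individually routine; the only real hazard is bookkeeping — keeping the signs and the powers of $\sqrt{-t}$ straight in the rescaling, and correctly reading ``$(\partial_s \tilde X)^{\perp}=0$'' as ``the image $\tilde M_s$ is stationary'' rather than ``$\tilde X$ is constant in $s$''. If one prefers to avoid the explicit rescaled-flow computation in the last paragraph, the same conclusion follows from uniqueness of smooth mean curvature flow: by \eqr{e:selfs} at $t=-1$, $M_{-1}$ satisfies \eqr{e:ss}, so by the first assertion $\sqrt{-t}\,M_{-1}$ is an MCF with the same time $-1$ slice as $M_t$, forcing $M_t = \sqrt{-t}\,M_{-1}$.
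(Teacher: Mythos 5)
Your proposal is correct, and for most of the lemma it follows the paper's own route: the forward direction is the same explicit dilation computation ($\partial_t X = X/2t$, $\nn$ scale-invariant, $H$ scaling like $(-t)^{-1/2}$), and your first converse direction (self-similar MCF $\Rightarrow$ \eqr{e:selfs}) is just the paper's computation of the normal part of $\partial_t\bigl(x/\sqrt{-t}\bigr)$ rephrased as ``the normal velocities of the two parametrizations of the same images must agree.'' The one genuine difference is the last direction, \eqr{e:selfs} $\Rightarrow$ $M_t=\sqrt{-t}\,M_{-1}$: the paper argues via uniqueness of MCF ($N_t=\sqrt{-t}\,M_{-1}$ is an MCF with the same $t=-1$ slice, hence $M_t=N_t$ for $t\geq -1$), which is exactly your fallback argument, whereas your primary argument passes to the rescaled flow $\tilde X = X/\sqrt{-t}$, $s=-\log(-t)$, shows \eqr{e:selfs} is equivalent to vanishing normal velocity of $\tilde M_s$, and concludes the image is stationary. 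Your route buys something: it does not invoke uniqueness of MCF (which for complete non-compact hypersurfaces is a nontrivial input, and in any case only yields $t\geq -1$ as in the paper), and it gives $M_t=\sqrt{-t}\,M_{-1}$ on the whole time interval. The price is the step you yourself flag: ``vanishing normal velocity implies constant image'' needs a word for non-compact hypersurfaces (one reparametrizes by integrating the tangential velocity field, which requires that this flow be complete, e.g.\ after noting the velocity is locally bounded and the surfaces properly embedded); since you also supply the paper's uniqueness argument as an alternative, the proof stands either way.
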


\begin{proof}
If $\Sigma$ is a hypersurface that satisfies \eqr{e:ss}, then we set $M_t=\sqrt{-t}\,\Sigma$ and $x(p,t)=\sqrt{-t}\,p$ for $p\in \Sigma$.  It follows that $\nn_{M_t}(x(p,t))=\nn_{\Sigma}(p)$, $H_{M_t}(x(p,t))=\frac{H_{\Sigma}(p)}{\sqrt{-t}}$, and $\partial_t x=-\frac{p}{2\sqrt{-t}}$.  Thus, $(\partial_t x)^{\perp}=-\frac{\langle p,\nn\rangle}{2\sqrt{-t}}=-H_{M_t}(x(p,t))$.  This proves that $M_t$ is an MCF and shows \eqr{e:selfs}.

On the other hand, suppose that $M_t$ is an MCF.
A computation shows that
\begin{equation}
(-t)^{\frac{3}{2}} \, \partial_t\left( \frac{x}{\sqrt{-t}}\right)=-t\, \partial_t x +\frac{x}{2} \, .
\end{equation}
If $\frac{M_t}{\sqrt {-t}}=M_{-1}$, then
\begin{equation}
0=(-t)^{\frac{3}{2}} \, \langle \partial_t\left( \frac{x}{\sqrt{-t}}\right),\nn_{M_{-1}}\rangle=-t\,\langle  \partial_t x,\nn_{M_{-1}}\rangle +\frac{1}{2}\langle x,\nn_{M_{-1}}\rangle\, .
\end{equation}
Hence, since $M_t$ is an MCF,  it follows that
\begin{equation}
H_{M_{-1}}=-\langle \partial_t x,\nn_{M_{-1}}\rangle = \frac{\langle x , \nn_{M_{-1}} \rangle }{2} \, .
\end{equation}
The equation for $H_{M_t}$ for general $t$ follows by scaling.

Finally, if an MCF $M_t$  satisfies \eqr{e:selfs}, then, by the first part of the
lemma,  $N_t=\sqrt{-t}M_{-1}$ is  an MCF with the same
initial condition as $M_t$; thus $M_t=N_t$ for $t\geq -1$.
\end{proof}

The next lemma, which is due to Huisken, \cite{H3} (cf. Ilmanen, page $6$ of \cite{I2}, \cite{A}; see also  \cite{CM1}), computes the first variation of the $F$ functional; since it is so short, we include the proof here.  The equivalence  of both (2) and (3) with \eqr{e:ss} follows  from this lemma.

 \begin{Lem}	\label{l:varl0}
If $x' = f \nn$ is a compactly supported normal variation of a hypersurface $\Sigma \subset \RR^{n+1}$
and $s$ is the variation parameter, then $\frac{\partial }{\partial s} \,    F  (\Sigma_s)  $ is
\begin{equation}	\label{e:Fprime0}
	  (4\pi)^{-\frac{n}{2}} \,  \int_{\Sigma}   f \, \left( H - \frac{ \langle x ,  \nn \rangle}{2}
		\right)  \, d\mu \, .
\end{equation}
\end{Lem}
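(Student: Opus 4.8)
The plan is to differentiate under the integral sign — legitimate because the variation $x' = f\,\nn$ is smooth and compactly supported, so $F(\Sigma_s)$ depends smoothly on $s$ — and thereby reduce to the pointwise first variation of the weighted area element $\e^{-|x|^2/4}\,d\mu$ along the family $\Sigma_s$.

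First I would recall the classical first variation of the area element: with $X = \partial_s x$ the variation field, $\frac{\partial}{\partial s}\,d\mu = (\dv_{\Sigma_s} X)\,d\mu$. Evaluating at $s = 0$, where $X = f\,\nn$, and using that the tangential gradient $\nabla_\Sigma f$ is tangent to $\Sigma$ (hence $\langle \nabla_\Sigma f,\nn\rangle = 0$) together with the paper's sign convention $\dv_\Sigma \nn = \langle \nabla_{e_i}\nn,e_i\rangle = H$, this gives
\begin{equation*}
\frac{\partial}{\partial s}\Big|_{s=0}\, d\mu = \left(f\,\dv_\Sigma \nn + \langle \nabla_\Sigma f,\nn\rangle\right)\, d\mu = f\,H\,d\mu \, ,
\end{equation*}
which one can sanity-check against mean curvature flow ($f = -H$ yields $\partial_s\,d\mu = -H^2\,d\mu$, i.e.\ area decreasing). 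The Gaussian weight is handled by elementary calculus: since $\frac{\partial}{\partial s}\big|_{s=0}\,|x|^2 = 2\,\langle x,x'\rangle = 2\,f\,\langle x,\nn\rangle$, one gets $\frac{\partial}{\partial s}\big|_{s=0}\,\e^{-|x|^2/4} = -\tfrac{1}{2}\,f\,\langle x,\nn\rangle\,\e^{-|x|^2/4}$.

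Combining these two computations by the product rule gives, pointwise on $\Sigma$,
\begin{equation*}
\frac{\partial}{\partial s}\Big|_{s=0}\left(\e^{-|x|^2/4}\,d\mu\right) = f\left(H - \frac{\langle x,\nn\rangle}{2}\right)\e^{-|x|^2/4}\,d\mu \, ,
\end{equation*}
and integrating over $\Sigma$ and multiplying by the constant $(4\pi)^{-n/2}$ yields \eqref{e:Fprime0}. I do not expect a genuine obstacle here; the only points requiring a little care are getting the sign in the first variation of area consistent with the conventions of the paper, and — if one wanted the variation formula for a general, not necessarily normal, variation field — noting that tangential variations do not affect the geometric functional $F$ to first order, so the hypothesis $x' = f\,\nn$ costs no generality.
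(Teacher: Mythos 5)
Your proposal is correct and follows essentially the same route as the paper: the paper likewise combines the first variation of the area element, $(d\mu)' = f\,H\,d\mu$, with the $s$-derivative of the Gaussian weight, which contributes $-\tfrac{f}{2}\langle x,\nn\rangle$, and multiplies the two effects. The sign conventions you use ($H = \dv_\Sigma \nn$, so $(d\mu)'=f\,H\,d\mu$) match the paper's, so no adjustment is needed.
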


\begin{proof}
The first variation formula (for volume) gives
 \begin{equation}	\label{e:fhdm}
 	(d\mu)' = f \, H \, d \mu \, .
 \end{equation}
 The $s$ derivative of   $\log \, \left[ (4\pi )^{-\frac{n}{2}} \, \e^{- \frac{|x|^2}{4}} \right]$ is given by
 $
 	  - \frac{ f}{2} \, \langle x , \nn \rangle   \, .
$
 Combining this with \eqr{e:fhdm} gives \eqr{e:Fprime0}.
\end{proof}

 \subsection{Self-shrinkers as minimal surfaces}

 We saw that  self-shrinkers in $\RR^{n+1}$ are minimal hypersurfaces for the conformally changed metric
 \begin{equation}
 	g_{ij} = \e^{- \frac{ |x|^2}{2n} } \, \delta_{ij}  \, .
\end{equation}
 We will use this in the next section to get local estimates and singular compactness, but first investigate these metrics a bit.  In particular,
we will see that these metrics cannot be made complete and, thus, the compactness of the space of self-shrinkers does not follow from compactness results for minimal surfaces such as the Choi-Schoen, \cite{CS}, compactness for positive Ricci curvature; cf. \cite{CM2}.    In fact, it turns out the Ricci curvature of these metrics does not  have a sign and goes to {\underline{negative infinity}} at infinity.

  We begin with the obvious observation that the distance to infinity is finite since
$\int_0^{\infty} \, \e^{- \frac{ t^2}{4n} } \, dt < \infty$.  Furthermore, for $n \geq 2$, the scalar curvature $\tilde{R}$ of the metric
 $u^{\frac{4}{n-1}}\, \delta_{ij}$ is given by{\footnote{See  page $184$ in \cite{SY}; the formula there is for an $n$-dimensional manifold, so we have shifted $n$ by one.}}
\begin{equation}
	\tilde{R} =   \frac{-4n}{n-1} \, u^{ \frac{ -(n+3)}{n-1}} \, \Delta \, u \, .
\end{equation}
Thus, for our conformal metrics, we have  $u = \e^{ \frac{ (1-n) \, |x|^2 }{8\, n}} $.  Using that $\Delta \e^f = \e^f ( \Delta f + |\nabla f|^2)$,
$\Delta |x|^2 = 2(n+1)$ on $\RR^{n+1}$, and $|\nabla |x|^2|^2 = 4 \, |x|^2$, we get that
\begin{equation}
	\Delta \, u = u \, \left( \frac{   (n-1)^2 }{16\, n^2} \, |x|^2 -   \frac{ n^2 -1}{4\, n} \right) \, .
\end{equation}
 It follows that the scalar curvature is
 \begin{equation}	\label{e:scalar}
	\tilde{R} =    u^{ \frac{ -4}{n-1}} \,  \left( n+1  - \frac{n-1}{4\, n} \, |x|^2 \right) =
		 \e^{\frac{|x|^2}{2n}} \,   \left(  n+1  - \frac{n-1}{4\, n} \, |x|^2 \right)  \, .
\end{equation}
There are a few interesting consequences of this formula.  First, the scalar curvature does not have a sign; it is positive when $|x|$ is small and then becomes negative near infinity.  Second, as $|x| \to \infty$, the scalar curvature goes to negative infinity.  It follows that the space is not complete; even though infinity is at a finite distance, there is no way to smoothly extend the metric to a neighborhood of infinity.

\section{Compactness away from a locally finite set of singular points}

We specialize now to   self-shrinkers in $\RR^3$.
We will use the following well-known local singular compactness for embedded minimal surfaces  in any Riemannian $3$-manifold.

\begin{Pro}	\label{p:cp1}
Given a point $p$ in a Riemannian $3$-manifold $M$, there exists $R>0$ so:\\
Let $\Sigma_j$ be embedded minimal surfaces in $B_{2R} = B_{2R} (p) \subset M$  with $\partial \Sigma_j\subset \partial B_{2R}$.  If
 each $\Sigma_j$ has area at most $V$ and genus at most $g$ for some fixed $V, g$, then there  is a finite collection of points $x_k$, a smooth embedded minimal surface $\Sigma \subset B_R$ with $\partial \Sigma \subset \partial B_R$ and a subsequence of the $\Sigma_j$'s that converges in $B_R$ (with finite multiplicity) to $\Sigma$ away from the $x_k$'s.
\end{Pro}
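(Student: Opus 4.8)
The plan is to deduce Proposition~\ref{p:cp1} by combining three standard pieces of minimal surface theory: a local area bound gives a uniform curvature estimate away from a controlled set of ``bad'' points, a curvature estimate gives smooth convergence, and a genus bound controls how many bad points can occur. First I would fix $p$ and choose $R>0$ small enough that $B_{2R}(p)$ is a geodesic ball with geometry close to Euclidean (e.g. the metric is uniformly equivalent to a flat metric, the injectivity radius is bounded below, and the ambient sectional curvature is bounded); this is where the dependence of $R$ on $p$ (and only on $p$) enters, and it is needed so that the monotonicity formula and the various minimal surface estimates apply with uniform constants on $B_{2R}$.

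The heart of the argument is a point-selection / $\epsilon$-regularity dichotomy. By the monotonicity formula on $B_{2R}$, the density ratios $\Area(B_s(x)\cap\Sigma_j)/s^2$ are almost monotone in $s$, and the total area bound $V$ bounds them. Choquet-type ($\epsilon$-regularity) results for minimal surfaces — in the form used by Choi--Schoen \cite{CS}, or the curvature estimates of Schoen, or White's version — say that there is $\epsilon_0>0$ so that wherever the density ratio at some definite scale is below $1+\epsilon_0$, one gets a uniform pointwise curvature bound $|A|^2 \le C/s^2$ on a smaller ball. The set of points of $B_R$ where this fails at a fixed scale $s$ is, by the area bound, finite with cardinality $\le C(V)$; letting $s\to 0$ along a diagonal subsequence produces a finite set of points $x_k$ (at most $C(V)$ of them) such that on compact subsets of $B_R\setminus\{x_k\}$ the second fundamental forms $|A_{\Sigma_j}|$ are uniformly bounded. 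Given this uniform curvature bound, standard elliptic estimates (the $\Sigma_j$ are locally graphs of solutions of the minimal surface equation with uniformly bounded gradient and the graphing functions have uniform $C^{k,\alpha}$ bounds for every $k$) give, after passing to a further subsequence, smooth $C^\infty$ convergence with finite multiplicity on compact subsets of $B_R\setminus\{x_k\}$ to a smooth embedded minimal surface; embeddedness of the limit follows because a point of transverse self-intersection or a sheet of multiplicity collapsing would violate the local embeddedness and the curvature bound, while tangential contact is ruled out by the strong maximum principle. That the area bound passes to the limit and that the limit extends across the $x_k$'s with finite multiplicity is a consequence of the monotonicity formula together with a removable singularity statement (the limit is minimal with locally finite area in $B_R\setminus\{x_k\}$, hence extends — here Allard's theorem or the classical removable singularity theorem for minimal surfaces applies, although for the statement as written one may also just take $\Sigma$ to be the limit on $B_R\setminus\{x_k\}$ and note its closure is a smooth minimal surface).

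Finally I would use the genus bound. A priori the number of bad points $x_k$ is only bounded by $C(V)$, which already suffices for the statement; the genus bound enters to ensure that the topology of the $\Sigma_j$ does not degenerate in an uncontrolled way — specifically, near each $x_k$ the convergence with multiplicity $>1$, or a neck pinching off, would be detected by a loss of genus, and the monotone-genus principle quoted in the introduction (genus of a subsurface is at most genus of the ambient surface) combined with $\mathrm{genus}(\Sigma_j)\le g$ bounds the genus that can ``concentrate'' at the $x_k$'s; this is the standard Ilmanen / White bubbling picture. The main obstacle, and the step requiring the most care, is the curvature estimate: one must invoke the correct $\epsilon$-regularity theorem for embedded minimal surfaces in a Riemannian $3$-manifold with only an area and genus bound (not, say, a stability or mean-convexity hypothesis), and verify its constants depend only on the local ambient geometry on $B_{2R}$. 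I expect to cite Choi--Schoen \cite{CS}, White's curvature estimate, or Ilmanen's elliptic regularity theory \cite{I1} for this, together with the classical fact that a genus bound plus an area bound plus the monotonicity formula forces the singular set of the convergence to be finite.
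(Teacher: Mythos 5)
There is a genuine gap at the heart of your argument: the $\epsilon$-regularity dichotomy you run is based on area density ratios, and this cannot produce the finite singular set. First, the counting is wrong: at a fixed scale $s$, the number of disjoint balls $B_s(x)$ with $\Area (B_s(x)\cap \Sigma_j) \geq (1+\epsilon_0)\pi s^2$ is only bounded by $C\,V/s^2$, which blows up as $s \to 0$; unlike the scale-invariant total curvature $\int |A|^2$, the area excess is not an additive quantity bounded independently of scale, so your diagonal argument does not yield a finite set of points $x_k$. Second, Allard-type regularity with density below $1+\epsilon_0$ is essentially a multiplicity-one statement, while the proposition explicitly allows (and the rest of the paper crucially needs) convergence with multiplicity: for two nearly parallel sheets joined by a small catenoidal neck, the density ratio at small scales is close to $2$ at \emph{every} point, so your criterion marks every point as bad, even though the convergence is smooth away from the single neck point. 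The correct quantity to concentrate on is $\int |A|^2$, and this is exactly where the genus bound enters: by Ilmanen's local Gauss--Bonnet estimate (theorem 3 of \cite{I1}), the area bound together with the genus bound gives a uniform bound on $\int_{B_{3R/2}\cap \Sigma_j} |A|^2$; the finitely many points $x_k$ are then the points where a definite amount $\epsilon$ of total curvature concentrates (at most total curvature divided by $\epsilon$ of them), and the Choi--Schoen $\epsilon$-regularity (small $\int |A|^2$ on a ball implies a pointwise $|A|$ bound on a smaller ball) gives uniform curvature bounds, hence smooth subsequential convergence with finite multiplicity, away from the $x_k$'s. This is the route the paper indicates (Gauss--Bonnet via \cite{I1}, then argue as in \cite{CS}); alternatively the paper cites the general compactness results of \cite{CM4}--\cite{CM8}.

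Relatedly, your remark that the bound $C(V)$ on the number of bad points ``already suffices for the statement'' and that the genus bound only prevents topological degeneration misplaces the role of the hypothesis: without the genus bound there is no uniform total curvature bound, the singular set need not be finite, and the proposition fails; the genus bound is not bookkeeping but the source of the curvature estimate. The parts of your write-up that are sound --- choosing $R$ so the ambient geometry is uniformly controlled, elliptic estimates and Arzel\`a--Ascoli once a pointwise $|A|$ bound is available, embeddedness of the limit via the maximum principle, and removability of the singular points in the limit --- all survive, but they must be fed by the total-curvature concentration argument rather than the density-ratio dichotomy.
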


 There are a number of ways to prove this proposition.  For instance, one can use the bounds on the area and genus to get uniform total curvature bounds on $B_{3R/2} \cap \Sigma_j$ (this follows from the local Gauss-Bonnet estimate given in theorem $3$ of \cite{I1}) and then argue as in Choi-Schoen, \cite{CS}.  Alternatively, the proposition is an immediate consequence of the much more general compactness results of \cite{CM4}--\cite{CM8} that hold  even without the area bound.

Combining  Proposition \ref{p:cp1}   with a covering argument (and going to a diagonal subsequence) gives a global singular compactness theorem for self-shrinkers:

\begin{Cor}	\label{c:cp1}
Suppose that
$\Sigma_i \subset \RR^3$ is a sequence of smooth embedded complete self-shrinkers with genus at most $g$,  $\partial \Sigma_i = \emptyset$, and
the scale-invariant area bound for all $x_0 \in \RR^3$ and all $R>0$
\begin{equation}
	 \Area \, \left( B_{R}(x_0) \cap \Sigma_i \right) \leq V\, R^2
	  \, .
\end{equation}
Then there is a subsequence (still denoted by $\Sigma_i$), a smooth  embedded complete (non-trivial) self-shinker $\Sigma$ without boundary, and a locally finite collection of points $\cS \subset \Sigma$ so that $\Sigma_i$ converges smoothly (possibly with multiplicity) to $\Sigma$ off of $\cS$.
\end{Cor}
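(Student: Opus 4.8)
The plan is to combine the local singular compactness of Proposition \ref{p:cp1} (applied in the conformally changed metric $g_{ij} = \e^{-|x|^2/2}\,\delta_{ij}$ on $\RR^3$, so that self-shrinkers become minimal surfaces) with a diagonal/exhaustion argument over an increasing sequence of balls. First, I would fix a countable set of points $p_1, p_2, \ldots$ in $\RR^3$ (for instance a countable dense set, or the centers of a locally finite cover by balls $B_{R(p_\ell)}(p_\ell)$ produced by Proposition \ref{p:cp1}). For each $\ell$, Proposition \ref{p:cp1} supplies a radius $R_\ell > 0$ such that any sequence of embedded minimal surfaces in $B_{2R_\ell}(p_\ell)$ with area at most $V'$ and genus at most $g$ has a subsequence converging, with finite multiplicity, away from finitely many points, to a smooth embedded minimal surface in $B_{R_\ell}(p_\ell)$. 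To apply this I need the $\Sigma_i$ restricted to $B_{2R_\ell}(p_\ell)$ to have bounded area and bounded genus: the area bound is immediate from the scale-invariant hypothesis $\Area(B_R(x_0)\cap\Sigma_i)\le V R^2$ (in the Euclidean metric, hence also, after adjusting the constant on a fixed compact ball, in the conformal metric $g$), and the genus bound follows since each $B_{2R_\ell}(p_\ell)\cap\Sigma_i$ is a subsurface of $\Sigma_i$, whose genus is at most $g$, together with the stated monotonicity of genus under inclusion.

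Next, I would run the diagonal argument: pass to a subsequence on which convergence holds (with finitely many singular points) in $B_{R_1}(p_1)$, then a further subsequence that also converges in $B_{R_2}(p_2)$, and so on; the diagonal subsequence converges on every $B_{R_\ell}(p_\ell)$, hence on all of $\RR^3$ away from the union $\cS$ of the exceptional point sets. Since each ball contributes only finitely many points and the cover is locally finite, $\cS$ is locally finite in $\RR^3$. On overlaps $B_{R_\ell}(p_\ell)\cap B_{R_{\ell'}}(p_{\ell'})$ the two local limits agree away from $\cS$ (smooth limits are unique), so the pieces glue to a smooth embedded surface $\Sigma \subset \RR^3$, off $\cS$, which is minimal in the metric $g$, i.e., a smooth embedded self-shrinker with $\partial\Sigma=\emptyset$. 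Completeness of $\Sigma$ as a self-shrinker: since the metric $g$ has finite distance to infinity (as noted in the excerpt, $\int_0^\infty \e^{-t^2/4n}\,dt<\infty$), "complete self-shrinker" should be interpreted as: $\Sigma$ is a closed subset of $\RR^3$ in the Euclidean sense, which is inherited in the limit because each $\Sigma_i$ is; one should be slightly careful here and argue that no sequence of points of $\Sigma$ escapes to a point of $\RR^3$ not already covered, which follows from the local convergence on the locally finite cover.

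The remaining points are that the limit is \emph{non-trivial} (i.e., nonempty, so that the statement is not vacuous) and that it is globally connected or can be taken so. Non-triviality I would get by a monotonicity/density argument: self-shrinkers through a fixed region have a definite lower area bound (Huisken's monotonicity formula gives $F(\Sigma_i)\ge 1$, or one uses that a point of $\Sigma_i$ cannot be too isolated), so after translating, if necessary, to keep a point of $\Sigma_i$ in a fixed ball, the limit picks up positive area there; alternatively one notes the hypotheses implicitly assume the $\Sigma_i$ are nonempty and a standard density lower bound prevents the local limits from all being the zero surface. The main obstacle I expect is precisely this bookkeeping around the singular set and non-degeneracy: ensuring that the exceptional points do not accumulate globally (handled by local finiteness of the cover), that multiplicity can be controlled, and that the limit is a genuine self-shrinker rather than an empty or lower-dimensional object; the actual PDE/minimal-surface input is entirely black-boxed in Proposition \ref{p:cp1}, so the corollary is essentially a careful diagonalization plus a density lower bound.
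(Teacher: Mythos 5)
The covering-by-balls plus diagonal-subsequence part of your argument is exactly the paper's route, and your bookkeeping (local finiteness of $\cS$, gluing the local limits, the limit being minimal in the conformal metric and hence a self-shrinker) is fine. The genuine gap is in the non-triviality step. Your suggestion to ``translate, if necessary, to keep a point of $\Sigma_i$ in a fixed ball'' is not available: the self-shrinker equation $H=\langle x,\nn\rangle/2$ involves the position vector explicitly, so a translate of a self-shrinker is not a self-shrinker, and in any case the corollary asserts that the \emph{original} subsequence converges to a non-trivial limit, so you are not free to recenter. Your fallback --- that the $\Sigma_i$ are nonempty and a density lower bound prevents the limit from vanishing --- does not close the gap either, because the real issue is that, a priori, the surfaces $\Sigma_i$ could escape every fixed compact subset of $\RR^3$ (or all of their area in any fixed ball could concentrate and disappear), in which case the limit would be empty even though each $\Sigma_i$ is nonempty. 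A density bound only helps once you know some fixed compact set contains points of infinitely many of the $\Sigma_i$, and that is precisely what must be proved.

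The paper supplies this missing ingredient with a short geometric argument: every self-shrinker must intersect the closed ball bounded by the spherical self-shrinker (the sphere of radius $2$). Indeed, if $\Sigma$ were disjoint from that closed ball, then the two associated mean curvature flows $\sqrt{-t}\,\Sigma$ and $\sqrt{-t}\,\SS^2_{2}$ would be disjoint at $t=-1$, and by the avoidance (maximum) principle their distance is non-decreasing; but both flows become extinct at the same point in space-time (the origin at $t=0$), forcing the distance to go to zero --- a contradiction. Hence every $\Sigma_i$ meets the fixed ball $\bar{B}_2$, and the limit picks up points there (either regular points of the convergence or points of $\cS\subset\Sigma$), so $\Sigma$ is non-empty. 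With this substituted for your translation/density paragraph, the rest of your proof goes through and coincides with the paper's.
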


\begin{proof}
The compactness follows by covering $\RR^3$ by a countable collection of small balls on which we can apply
  Proposition \ref{p:cp1} and then passing to a   diagonal subsequence.  To see that the limit must be non-trivial, observe that every self-shrinker must intersect the closed ball bounded by the spherical self-shrinker.  This follows from the maximum principle since the associated MCF's both disappear at the same point in space and time.
   \end{proof}

A set $\cS \subset \RR^3$ is said to be {\emph{locally finite}} if $B_R \cap \cS$ is finite for every $R > 0$.

\section{Showing that the convergence is smooth}

 It remains to show that the convergence is smooth everywhere.  By Allard's theorem, \cite{Al}, this follows from showing that the multiplicity must be one.  We will show that if the multiplicity is greater than one, then the limit $\Sigma$ is   $L$-stable where
 \begin{equation}	\label{e:defL31}
	L = \Delta + |A|^2 - \frac{1}{2} \, \langle x , \nabla (\cdot) \rangle + \frac{1}{2}
\end{equation}
is the linearization of the self-shrinker equation (see \cite{CM1}).

\begin{Pro}	\label{p:jacobi}
If the multiplicity of the convergence of the $\Sigma_i$'s in Corollary \ref{c:cp1} is greater than one, then
$\Sigma$ is $L$-stable.
 \end{Pro}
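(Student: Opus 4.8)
The plan is to use the standard dichotomy from minimal surface theory: when a sequence of minimal surfaces converges with multiplicity $>1$ to a limit $\Sigma$, one can extract a positive Jacobi field on $\Sigma$ (away from the singular set $\cS$), and a positive solution of the stability operator forces $L$-stability. Concretely, I would work locally: fix a point $p \in \Sigma \setminus \cS$ and a small neighborhood in which $\Sigma$ is a graph and the $\Sigma_i$ converge smoothly as (eventually) a union of graphs $u_i^1 < u_i^2 < \dots$ over $\Sigma$ of sheets that all limit to the single sheet $\Sigma$. The function $w_i = u_i^2 - u_i^1 > 0$, after normalizing by its value at $p$ (or dividing by $\sup w_i$ on a fixed compact piece), converges to a nonnegative limit $w$. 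Since both $u_i^1$ and $u_i^2$ solve the self-shrinker graph equation (equivalently, are minimal for the conformal metric $g_{ij} = \e^{-|x|^2/(2n)}\delta_{ij}$), the difference $w_i$ satisfies a linear elliptic equation whose coefficients converge to those of $L$; taking the limit shows $L w = 0$ weakly, hence classically by elliptic regularity. The Harnack inequality then upgrades $w \geq 0$ to $w > 0$ on the connected component, and the normalization prevents $w \equiv 0$.

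The next step is to globalize: I would cover $\Sigma \setminus \cS$ by such neighborhoods and patch the locally-defined positive Jacobi fields together. The patching requires care because a priori the local normalizations differ by constants; the standard fix is to normalize all of them at a single base point $p_0$ (connecting any other point to $p_0$ by a path in $\Sigma \setminus \cS$, which is possible since $\cS$ is locally finite and $\Sigma$ is connected), and use the Harnack inequality along chains of balls to see the local limits agree up to a global positive constant. This produces a well-defined $w > 0$ on $\Sigma \setminus \cS$ with $Lw = 0$.

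Once we have a positive $w$ with $Lw=0$ on $\Sigma \setminus \cS$, $L$-stability is the usual computation: for compactly supported $u$ on $\Sigma$, write $u = w \varphi$ (on $\Sigma \setminus \cS$) and compute, using $Lw = 0$ and integrating by parts with the weight $\e^{-|x|^2/4}$,
\begin{equation}
	\int_{\Sigma} (-u\, L\, u)\, \e^{-\frac{|x|^2}{4}} = \int_{\Sigma} w^2 \, |\nabla \varphi|^2 \, \e^{-\frac{|x|^2}{4}} \geq 0 \, .
\end{equation}
The one subtlety here is that $w$ may blow up near $\cS$, so the substitution $\varphi = u/w$ need not be compactly supported in the naive sense; but since $\cS$ is locally finite (a discrete set of points in a surface, hence $2$-codimensional) one can insert logarithmic cutoffs vanishing near $\cS$ whose Dirichlet energy tends to zero, and pass to the limit. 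This capacity argument — controlling the behavior near the removable singular set $\cS$ — is the step I expect to be the main technical obstacle; everything else is the standard "multiplicity $>1$ $\Rightarrow$ positive Jacobi field $\Rightarrow$ stability" package adapted to the drift operator $L$ and its natural weighted $L^2$ structure.
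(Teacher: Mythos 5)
Your construction of the positive solution of $L w = 0$ on $\Sigma \setminus \cS$ is essentially the paper's: sheet differences, Harnack plus Schauder estimates, and Arzel\`a--Ascoli (the paper sidesteps your patching step by taking the top minus the bottom sheet over a global exhaustion of $\Sigma \setminus \cS$, which is globally well defined by embeddedness and orientability, so no Harnack-chain normalization is needed). Where you genuinely diverge is the treatment of $\cS$. The paper proves that the limit function is bounded near each singular point --- using White's foliation of a neighborhood of $\Sigma$ by graphs that are minimal in the conformal metric, the Harnack inequality for the leaves, and the maximum principle together with Hausdorff convergence of the $\Sigma_i$ --- so the singularities are removable and one gets a positive solution of $L w = 0$ on all of $\Sigma$; the Fischer-Colbrie--Schoen log substitution then gives $L$-stability with no boundary issues. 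You instead keep $w$ only on $\Sigma \setminus \cS$ and invoke the zero capacity of points in a surface. That route can be made to work, but not quite as you wrote it: without the removability step, $w$ could a priori blow up like $\log(1/r)$ at a point of $\cS$, and then the $w^2$-weighted Dirichlet energy of a logarithmic cutoff does \emph{not} tend to zero, so you cannot justify cutting off $\varphi = u/w$ inside the identity $\int (-u\,L u)\,\e^{-|x|^2/4} = \int w^2 |\nabla \varphi|^2 \, \e^{-|x|^2/4}$ directly. The correct order is: use positivity of $w$ (via the log trick, as in the paper) to get the stability inequality $\int \bigl( |\nabla \phi|^2 - |A|^2 \phi^2 - \tfrac{1}{2}\phi^2 \bigr) \e^{-|x|^2/4} \geq 0$ for test functions $\phi$ supported in $\Sigma \setminus \cS$, and only then pass to arbitrary compactly supported $\phi$ by multiplying the \emph{test function} by log cutoffs $\eta_\epsilon$; there the error terms involve only the bounded $\phi$ against $|\nabla \eta_\epsilon|^2$ (with the Gaussian weight bounded on the support of $\phi$), so they do vanish. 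With that rearrangement your capacity argument is a legitimate, softer substitute for the paper's removable-singularity step (no need for White's foliation), at the cost of obtaining only the stability inequality rather than a globally defined positive solution of $L u = 0$ on $\Sigma$.
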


   The  idea for the proof of Proposition \ref{p:jacobi} comes from a related argument for minimal surfaces in  \cite{CM9}.

\begin{proof}
(of Proposition \ref{p:jacobi}).
Since the limit surface $\Sigma \subset \RR^3$ is complete, properly embedded, and has no boundary,
$\Sigma$ separates $\RR^3$ and has a well-defined unit normal $\nn$.  By assumption, the convergence of the $\Sigma_i$'s to $\Sigma$ is {\underline{not}} smooth and, thus, by Allard's theorem
\cite{Al} must have multiplicity greater than one.

{\bf{Existence of a positive solution $u$ of $L \, u = 0$}}.
Let  $\cS$ be the
  (non-empty) locally finite collection of singular points  for the convergence.
Since the convergence is smooth away from the $y_i$'s,
we can choose $\epsilon_i \to 0$ and domains
$\Omega_i \subset \Sigma$
exhausting $\Sigma \setminus
\cS$ so that each $\Sigma_i$ decomposes locally as
a collection of graphs over $\Omega_i$ and is contained in the
$\epsilon_i$ tubular neighborhood of $\Sigma$.
By embeddedness (and orientability), these sheets are ordered by
height.  Let $w^+_i$ and $w^-_i$ be the functions representing
the top and bottom sheets over $\Omega_i$.
Arguing as in
equation (7) of \cite{Si2}, the difference $w_i = w^+_i - w^-_i$
satisfies $L w_i = 0$ up to higher order correction terms since the operator $L$ given by \eqr{e:defL31} is the linearization of the self-shrinker equation (this is proven in appendix A in \cite{CM1}).

Fix some $y \notin \cS$ and set
$u_i =  w_i \, / \, w_i (y)$.
Since the $u_i$'s are positive (i.e., the sheets are disjoint),
the Harnack inequality implies local $C^{\alpha}$ bounds
(theorem 8.20 of \cite{GiTr}).
Elliptic theory then gives $C^{2 , \alpha}$ estimates
(theorem 6.2 of \cite{GiTr}).
By the Arzela-Ascoli theorem,
a subsequence converges uniformly in $C^2$ on compact subsets of $\Sigma \setminus \cS$ to a non-negative function $u$ on $\Sigma \setminus \cS$ which satisfies
\begin{equation}
	L u = 0 {\text{ and }} u(y) = 1 \, .
\end{equation}
It remains to show that $u$ extends smoothly
across the $y_k$'s to a solution of $L u = 0$.
This follows from standard removable singularity results for elliptic equations once we show that $u$ is bounded up to each $y_k$.
Consider the cylinder $N_k$ (in exponential normal coordinates) over
$B_{\epsilon}(y_k) \subset \Sigma$.  If $\epsilon$ is
sufficiently small, then
a result
of White (see the appendix of \cite{W2})
gives a foliation by minimal (in the conformal metric) graphs $v_t$ of some normal neighborhood
of $\Sigma$ in $N_k$ so that
\begin{align}   \label{e:white}
        v_0 (x) &= 0
                {\text{ for all }} x \in B_{\epsilon}(y_k) \, , \,
{\text{and}}  \\
        v_t (x) &= t
                {\text{ for all }} x \in \partial B_{\epsilon}(y_k) \, .
\notag
\end{align}
Furthermore, the Harnack inequality
implies that
$t / C \leq v_t \leq C \, t$ for some $C > 0$.
In particular, combining \eqr{e:white} with the maximum principle for minimal surfaces (and the
Hausdorff convergence of the $\Sigma_i$'s to $\Sigma$), we see that $u_i$ is
bounded on
$B_{\epsilon}(y_k)$ by a multiple of its supremum on
$B_{\epsilon}(y_k) \setminus
 B_{\epsilon / 2}(y_k)$.  We conclude that $u$ has a removable singularity at each $y_i$ and thus extends to a non-negative solution of $L u = 0$ on all of $\Sigma$; since $u(y) =1$, the Harnack inequality implies that $u$ is everywhere positive.

 {\bf{Using $u$ to prove $L$-stability}}.
 We will now use a variation on an argument of Fischer-Colbrie-Schoen (see, e.g., proposition $1.26$ in \cite{CM2}). Set $w = \log u$, so that
\begin{equation}	\label{e:logu}
	\Delta w = \frac{\Delta u}{u} - |\nabla w |^2 = -|A|^2  + \frac{1}{2} \langle x , \nabla w \rangle - \frac{1}{2} - |\nabla w |^2 \, .
\end{equation}
	Given  $\phi$ with compact support,   applying Stokes' theorem to $ \dv \left( \phi^2 \,  \e^{\frac{-|x|^2}{4}}  \, \nabla w \right) $ gives
\begin{align}	\label{e:stable}
	0 &=  \int \left( 2 \, \phi \langle \nabla \phi , \nabla w \rangle
		+ \left[ - |A|^2  - \frac{1}{2}  - |\nabla w |^2 \right]  \, \phi^2  \right) \, \e^{\frac{-|x|^2}{4}}  \notag \\
		&\leq  \int \left( | \nabla \phi |^2
		 - |A|^2 \, \phi^2  - \frac{1}{2}    \, \phi^2  \right) \, \e^{\frac{-|x|^2}{4}}  = -
		  \int_{\Sigma} (\phi \, L \phi) \, \e^{\frac{-|x|^2}{4}}  \, ,
\end{align}
where the inequality used   $ 2 \, \phi \langle \nabla \phi , \nabla w \rangle  \leq \phi^2 \, |\nabla w|^2 + |\nabla \phi|^2$ and the last equality came from applying Stokes' theorem to $\dv \, \left( \phi \, \nabla \phi \, \e^{\frac{-|x|^2}{4}} \right)$.
\end{proof}

\begin{proof}
(of Theorem \ref{c:cpt}).  We will argue by contradiction.  Suppose therefore that there is a
sequence
 of smooth complete embedded self-shrinkers $\Sigma_i \subset \RR^3$ with genus $g$,  $\partial \Sigma = \emptyset$, and
the scale-invariant area bound for all $x_0 \in \RR^3$ and all $R > 0$
\begin{equation}
	 \Area \, \left( B_{R}(x_0) \cap \Sigma_i \right)  \leq V \, R^2
	 \, ,
\end{equation}
	but so that $\Sigma_i$ does not have any smoothly convergent subsequences.
	By Corollary  \ref{c:cp1}, we can pass to a subsequence so that the $\Sigma_i$'s converge (possibly with multiplicity) to an embedded self-shrinker $\Sigma$ away from a locally finite set $\cS \subset \Sigma$.  By assumption, $\cS$ is non-empty and, by Allard's theorem, the convergence has multiplicity greater than one.  Consequently, Proposition \ref{p:jacobi} implies that $\Sigma$ is $L$-stable.  However, Theorem \ref{t:spectral0} gives that no such $\Sigma$ exists, giving the desired contradiction.
\end{proof}

\appendix

\section{There are no $L$-stable self-shrinkers}

In this appendix, we will include a   proof of Theorem \ref{t:spectral0} from \cite{CM1} for the reader's convenience.   Throughout,    the smooth complete embedded
 hypersurface $\Sigma \subset \RR^{n+1}$ will be  a self-shrinker without boundary and with polynomial volume growth.

We will need  the following calculation from \cite{CM1}:  The   normal part $\langle v , \nn \rangle$ of a constant vector field $v $  is an eigenfunction of $L$ with
\begin{equation}	\label{e:spec1}
	 L \langle v , \nn \rangle =  \frac{1}{2} \,  \langle v , \nn \rangle \, .
\end{equation}

\begin{proof}
(of Theorem \ref{t:spectral0}).
We will construct a  compactly supported function $u$ that does not satisfy   \eqr{e:Lstable}.  Fix a point $p$ in $\Sigma$ and define a function   $v$ on $\Sigma$ by
\begin{equation}
	v(x)  = \langle \nn (p) , \nn (x) \rangle \, .
\end{equation}
It follows that $v (p) = 1$, $|v| \leq 1$, and, by \eqr{e:spec1},
that
	$L \, v = \frac{1}{2} \, v $.
	Therefore, given any smooth function $\eta$, we have
\begin{align}
	L \, (\eta \, v) &= \eta \, L \, v + v \, \left( \Delta \eta - \frac{1}{2} \langle x , \nabla \eta \rangle \right) + 2 \langle \nabla \eta , \nabla v
	\rangle \notag \\
	& = \frac{1}{2} \, \eta \, v  + v \, \left( \Delta \eta - \frac{1}{2} \langle x , \nabla \eta \rangle \right) + 2 \langle \nabla \eta , \nabla v
	\rangle
\, .
\end{align}
Taking $\eta$ to have compact support, we get that
\begin{align}	\label{e:notst}
	- \int \eta \, v \, L (\eta \, v) \, \e^{\frac{-|x|^2}{4}} &=
	  - \int \left[   \frac{1}{2} \,   \eta^2 \, v^2  +
	\eta \,   v^2  \, \left( \Delta \eta - \frac{1}{2} \langle x , \nabla \eta \rangle \right) +
	\frac{1}{2}  \,  \langle \nabla \eta^2 , \nabla v^2
	\rangle  \right] \, \, \e^{\frac{-|x|^2}{4}}  \notag \\
	& =  - \int \left[    \frac{1}{2} \,   \eta^2 \, v^2  -  v^2  \, | \nabla \eta|^2 \right]
	  \, \e^{\frac{-|x|^2}{4}}
	  \, ,
\end{align}
where the second  equality uses Stokes' theorem to $ \frac{1}{2} \, \dv \left( v^2 \, \nabla \eta^2 \, \e^{\frac{-|x|^2}{4}} \right)$ to get
\begin{equation}
	 \int    \frac{1}{2} \,
	 \langle \nabla \eta^2 , \nabla v^2
	\rangle   \, \e^{\frac{-|x|^2}{4}}  = - \int v^2 \,  \left(   \eta \, \Delta \eta +   |\nabla \eta|^2 - \frac{1}{2}  \, \eta \langle x ,  \nabla \eta
	\rangle \right) \,  \e^{\frac{-|x|^2}{4}}  \, .
\end{equation}
If   $\eta$ is identically one on $B_R$ and cuts off linearly to zero on $B_{R+1} \setminus B_R$, then \eqr{e:notst} gives
\begin{equation}	\label{e:contr}
	- \int \eta \, v \, L (\eta \, v) \, \e^{\frac{-|x|^2}{4}} \leq  \int_{\Sigma \setminus B_R} v^2 \, \e^{\frac{-|x|^2}{4}}
	- \frac{1}{2} \, \int_{B_R \cap \Sigma}  v^2 \, \e^{\frac{-|x|^2}{4}} \, .
\end{equation}
However, since $|v| \leq 1$ and $\Sigma$ has polynomial volume growth, we know that
\begin{equation}
 \lim_{R \to \infty} \, \, \int_{\Sigma \setminus B_R} v^2 \, \e^{\frac{-|x|^2}{4}} = 0 \, ,
\end{equation}
so the  right-hand side of \eqr{e:contr} must be negative for all sufficiently large $R$'s.  In particular, when $R$ is large, the function $u = \eta \, v$ does not satisfy \eqr{e:Lstable}.
  This completes the proof.
\end{proof}

\end{document}